\newtheorem{Thm}{Theorem}[section]
\newtheorem{corollary}[Thm]{Corollary}
\newtheorem{proposition}[Thm]{Proposition}
\newtheorem{definition}[Thm]{Definition}
\newtheorem{remark}[Thm]{Remark}
\newtheorem{theorem}[Thm]{Theorem}
\newtheorem*{naimark}{Naimark's Theorem}
\newcommand{\inner}[2]{\langle #1,#2 \rangle}
\newcommand{\norm}[1]{\left\| #1 \right\|}
\newcommand{\abs}[1]{\left| #1 \right|}
\newcommand{\spann}{\mbox{\rm span}}
\def\ldots{\mathinner{\ldotp\ldotp\ldotp}}
\def\ldots{\mathinner{\cdotp\cdotp\cdotp}}
\def\cH{{\mathcal{H}}}
\begin{document}

\title{Every hilbert space frame has a Naimark complement}
\author[P.G. Casazza, M. Fickus, D. Mixon, J. Peterson and I. Smalyanau
 ]{Peter G. Casazza, Matthew Fickus, Dustin G. Mixon, Jesse Peterson and Ihar Smalyanau}
\address{Casazza/Peterson/Smalyanau: Department of Mathematics, University
of Missouri, Columbia, MO 65211-4100}

\email{Casazzap@missouri.edu; Matthew.Fickus@afit.edu; dmixon@princeton.edu;   jdpq6c@mail.missouri.edu}

\keywords{frame; fusion frame; Naimark complement; chordal distance}

\subjclass[2010]{46C15, 47A05}


\maketitle

\begin{abstract}
Naimark complements for Hilbert space Parseval frames are one of the most fundamental and useful results in the field of frame theory.  We will show actually all Hilbert space frames have Naimark complements which possess all the usual properties for Naimark complements of Parseval frames with one notable exception.  Thus these complements can be used with regard to equiangular frames, the restricted isometry property, fusion frames, etc. Along the way, we will correct a mistake in a recent fusion frame paper where chordal distances for Naimark complements are computed incorrectly.
\end{abstract}

\section{Introduction}

Naimark complements for Hilbert space Parseval frames are one of the most fundamental and useful results in the field (see e.g. \cite{Ch}).  A family of vectors
$\{f_n\}_{n=1}^N$ in an $M$-dimensional Hilbert space $\cH_M$ is called a {\em frame} for $\cH_M$ if there are constants $0<A\le B < \infty$
satisfying
\begin{equation} \label{IN1} 
	A \norm{f}^2 \leq \sum_{n=1}^N \abs{\inner{f}{f_n}}^2 \leq B \norm{f}^2, \mbox{ for all } f\in \cH_M.
\end{equation}
The numbers $A,B$ are called {\em lower} and {\em upper frame bounds} of the frame respectively.  If we only require the upper frame bound, we call $\{f_n\}_{n=1}^N$ a $B-${\em Bessel sequence}.  If $A=B$ we call this a $B-${\em tight frame}, and if $A=B=1$, this is a {\em Parseval frame}.  

\begin{naimark}
A family of vectors $\{f_n\}_{n=1}^N$ is a Parseval frame for an $M$-dimensional Hilbert space $\cH_M$ if and only if there is a Hilbert space $\cH_N \supseteq \cH_M$ with an orthonormal basis $\{e_n\}_{n=1}^N$ so that the orthogonal projection $P: \cH_N \rightarrow \cH_M$ satisfies $Pe_n = f_n$ for all $n=1,2,\ldots,N$.  Moreover $\{(I-P)e_n\}_{n=1}^N$ is a Parseval frame for an $(N-M)$-dimensional Hilbert space.  We call such a frame a {\bf Naimark complement} of $\{f_n\}_{n=1}^N$.
\end{naimark}

It is known that many properties of a given Parseval frame carry over to Naimark complements including frame bounds, equal norms among frame vectors, equiangularity among frame vectors, and the restricted isometry property (RIP) to list but a few.  This makes Naimark's theorem useful for finding or constructing frames with specific properties given an existing one \cite{CCHKP, CFMWZ11, FMT}.  Also problems can often be reduced to special cases by switching to Naimark complements, for example the Paulsen problem \cite{CC}.  Naimark's theorem is one of the most used theorems in frame theory.

In this paper we will show all frames, not just Parseval frames, have a natural Naimark complement, and these too carry many basic properties of the frame to the complement with one notable exception. Specifically, the lower frame bound of the Naimark complement may be quite different from the lower frame bound of the original frame.  However, we may calculate this lower frame bound exactly in terms of the eigenvalues of the frame operator of the original frame. 

Fusion frames, originally called frames of subspaces \cite{CK}, are a natural generalization of frames which have developed rapidly due to their applications to problems in sensor networks and distributed processing \cite{CF, CKL}.  The interested reader may see www.fusionframe.org and www.framerc.org for extensive literature on the subject.  Naimark complements in the fusion frames sense, called {\em Naimark fusion frames}, were introduced in \cite{CCHKP}; our concept of a generalized Naimark complement may be considered is this setting as well.  Every fusion frame has a natural complementary Naimark fusion frame, and many properties of these Naimark fusion frames may also be derived from the original fusion frame. Recently, \cite{CCHKP} presented methods for constructing fusion frames with desired properties including chordal distances between subspaces.  This distance is closely related to maximum resillience to noise and erasures when using fusion frames for signal reconstruction.  An incorrect calculation was made while computing this chordal distance, and as we examine the properties of these generalized Naimark fusion frames, we will correct this miscalculation.

This paper is organized as follows.  In section 2 we provide the required basic definitions in frame theory and develop the generalized Naimark complement.  We then demonstrate its similarity to the usual Naimark complement, and we note a significant difference.  In section 3 we examine specific properties of a frame which carry over to its generalized Naimark complement.  Finally, section 4 adapts the new generalized Naimark complement to the setting of fusion frames.


\section{Construction of General Naimark Complements}
Throughout, when given a frame $\{f_n\}_{n=1}^N$ for an $M-$dimensional Hilbert space $\cH_M$ with frame bounds $A,B$, we will assume these are the {\em optimal} values.  That is $A$ and $B$ are the supremum and infimum respectively of all $A$'s and $B$'s satisfying (\ref{IN1}).  The {\em synthesis operator} is $F:\ell_2(N) \rightarrow \cH_M$ given by $F(e_n) = f_n$, where $\{e_n\}_{n=1}^N$ is the canonical orthonormal basis for $\ell_2(N)$ while the {\em analysis operator} of the frame is $F^*:\cH_M \rightarrow  \ell_2(N)$ given by $F^*(f) = (\inner{f}{f_n})_{n=1}^N$.  The {\it frame operator} is then given by $FF^*:\cH_M \rightarrow \cH_M$.  That is, 
\[
	FF^*(f) = \sum_{n=1}^N \inner{f}{f_n} f_n= \sum_{n=1}^N f_nf_n^* f.
\] 
This is a positive, self-adjoint, invertible operator on $\cH_M$.
  
From the matrix point of view, the synthesis operator $F$ is a matrix where the frame vectors form the columns:
\[
	F =  \begin{bmatrix}
					|		& | 	& \cdots & |\\
					f_1 & f_2 & \cdots & f_N\\
					|		& | 	& \cdots & |
				\end{bmatrix}.
\]
In terms of matrix completion, if $F$ is an $M\times N$ Parseval frame, it can be extended by Naimark's theorem to an $N\times N$ unitary matrix by appending $(N-M)$ rows to $F$ to obtain an $N \times N$ unitary matrix
\[
	\begin{bmatrix}
				F\\
				G
	\end{bmatrix}
	 = \begin{bmatrix}
				|		& | 	& \cdots & |\\
				f_1 & f_2 & \cdots & f_N\\
				|		& | 	& \cdots & |\\
				g_1 & g_2 & \cdots & g_N\\
				|		& | 	& \cdots & |\\
			\end{bmatrix}.
\]
In this case, $G=\{g_n\}_{n=1}^N$ is the Naimark complement of $F=\{f_n\}_{n=1}^N$.  This is a slight abuse of notation; we will often make no distinction between frames $\{f_n\}_{n=1}^N$, $\{g_n\}_{n=1}^N$ and their associated $M\times N$ and $(N-M)\times N$ synthesis matrices $F$, $G$.

Given a $B-$Bessel sequence, we will construct a generalized Naimark complement as follows.   We first complete the $B-$Bessel sequence to a $B$-tight frame, and then we obtain a Parseval frame by scaling these frame vectors.  This resulting Parseval frame has a usual Naimark complement; we obtain the general Naimark complement of the $B-$Bessel sequence by then re-scaling and removing the vectors added during the completion.  We will make this all precise, but first, let us recall the proof of Naimark's theorem.

\begin{proof}[Proof of Naimark's Theorem]
Given a Parseval frame $\{f_n\}_{n=1}^N$ for $\cH_M$, the analysis operator $F^*:\cH_M \rightarrow \ell_2(N)$ is the isometry 
\[ 
	F^*f = (\inner{f}{f_1}, \inner{f}{f_2}, \cdots,\inner{f}{f_N}).
\] 
Letting $P$ be the orthogonal projection of $\ell_2(N)$ onto $F^*(\cH_M)$, for any $F^*f$ we have
\begin{align*}
	\inner{F^*f}{Pe_n}	= \inner{F^*f}{e_n} 		
											= \inner{f}{Fe_n}	
											= \inner{f}{f_n}			
											= \inner{F^*f}{F^*f_n}.
\end{align*}
It follows that $Pe_n = F^*f_n$.  Since $F^*$ is an isometry, we may identify $f_n$ with $F^*f_n$, and this completes the proof.
\end{proof}

Our first step in defining the general Naimark complement is to complete a $B-$Bessel sequence to a $B-$tight frame.

\begin{proposition}\label{canon}
Let $\{f_n\}_{n=1}^N$ be a $B-$Bessel sequence in $\cH_M$ with synthesis operator $F$.  Suppose $FF^*$ has eigenvectors $\{\varphi_m\}_{m=1}^M$ with corresponding eigenvalues $B=\lambda_1=\cdots=\lambda_K>\lambda_{K+1}\geq \cdots \geq\lambda_M$.  Set
\[
	h_m:=(B-\lambda_m)^{\frac{1}{2}}\varphi_m
\]
for $K+1\leq m \leq M$.  Then $\{f_n\}_{n=1}^N \cup \{h_m\}_{m=K+1}^M$ is a $B-$tight frame.
\end{proposition}

\begin{proof}
Let $H$ be the synthesis matrix for $\{h_m\}_{m=K+1}^M$ so that $\begin{bmatrix}F &H \end{bmatrix}$ is the synthesis matrix for $\{f_n\}_{n=1}^N \cup \{h_m\}_{m=K+1}^M$.  The associated frame operator is then
\begin{align*}
	\begin{bmatrix}F &H \end{bmatrix}\begin{bmatrix}F &H \end{bmatrix}^* &=FF^*+HH^*
				= \sum_{m=1}^M \lambda_m \varphi_m\varphi_m^* + \sum_{m=K+1}^{M}(B-\lambda_m)\varphi_m\varphi_m^*
				=B\sum_{m=1}^M \varphi_m\varphi_m^*
				=BI,
\end{align*}
and $\{f_n\}_{n=1}^N \cup \{h_m\}_{m=K+1}^M$ is a $B-$tight frame.
\end{proof}

Notice the frame operator for $\{f_n\}_{n=1}^N \cup \{h_m\}_{m=K+1}^\ell$ where $K+1\leq \ell\leq M$ is given by $FF^*+\sum_{m=1}^\ell h_mh_m^*$.  Due to a classic result involving the interlacing of eigenvalues when adding rank one operators (see for example Theorem 4.3.8 in \cite{HJ}), at fewest $M-K$ vectors must be unioned with $\{f_n\}_{n=1}^N$ to yield a $B-$tight frame.  Note the completion in Proposition \ref{canon} appends a minimal number of vectors; we will call this completion the {\it canonical completion} of $\{f_n\}_{n=1}^N$.  As the canonical completion can be scaled to a Pareval frame which has a usual Naimark complement, this allows us to define general Naimark complements for $B-$Bessel sequences.

\begin{definition} \label{naimarkdef}
Let $\{f_n\}_{n=1}^N$ be a $B-$Bessel sequence in $\cH_M$ and $\{f_n\}_{n=1}^N \cup \{h_m\}_{m=K+1}^M$ the canonical completion to a $B-$tight frame.  Let $\Phi^*: \cH_{M}\rightarrow \ell_2(M+N-K)$ be the analysis operator for the Parseval frame $\{\frac{1}{\sqrt{B}}f_n\}_{n=1}^N \cup \{\frac{1}{\sqrt{B}}h_m\}_{m=K+1}^{M}$.  Then $\Phi^*$ is an isometry, and by the proof of Naimark's theorem, there exists an orthonormal basis  $\{e_n\}_{n=1}^{M+N-K}$ for $\ell_2(M+N-K)$ so that $\sqrt{B}Pe_n = \Phi^*f_n$ for all $n=1,\ldots,N$ where $P$ is the orthogonal projection onto $\Phi^*(\cH_M)$.  Then
\[
	\{g_n\}_{n=1}^N := \{\sqrt{B}(I-P)e_n\}_{n=1}^N
\]
is a {\it general Naimark complement} of $\{f_n\}_{n=1}^N$.
\end{definition}

\begin{remark} \label{rmk}
Formally, a general Naimark complement is embedded in $\ell_2(M+N-K)$.  Notice however 
\[
	\Phi^*f_n+g_n= \sqrt{B}e_n  \mbox{ and } \Phi^*f_n\perp g_n
\]
for all $n=1,\ldots,N$.  Although we discard the vectors $\{h_m\}_{m=K+1}^M$ in the definition of the general Naimark complement, setting $h'_m=\sqrt{B}(I-P)e_{N-K+m}$ for $m=K+1, \ldots, M$, we also have
\[
	\Phi^*h_m+h'_m= \sqrt{B}e_{N-K+m} \mbox{ and } \Phi^*h_m\perp h'_m.
\]
Therefore by identifying $\cH_M$ with $\Phi^*(\cH_M)\subseteq \ell_2(M+N-K)$ and $\cH_{N-K}$ with $[\Phi^*(\cH_M)]^{\perp}\subseteq \ell_2(M+N-K)$, we may consider $f_n,h_m\in \cH_M$, and $g_n,h'_m\in \cH_{N-K}$ where $\cH_M\oplus \cH_{N-K}=\ell_2(M+N-K)$.  These identifications allow us to consider Naimark complements from the standpoint of matrix completion.  To be clear, if the $B-$Bessel sequence $\{f_n\}_{n=1}^N$ has the canonical completion $\{f_n\}_{n=1}^N \cup \{h_m\}_{m=K+1}^M$ and a general Naimark complement $\{g_n\}_{n=1}^N$, this identification allows us to set $F$ as the $M\times N$ synthesis matrix of $\{f_n\}_{n=1}^N$, $H_1$ as the $M\times (M-K)$ synthesis matrix of $\{h_m\}_{m=K+1}^M$, $G$ as the $(N-K) \times N$ synthesis matrix of $\{g_n\}_{n=1}^N$, and $H_2$ as the $(N-K) \times (M-K)$ synthesis matrix of $\{h'_m\}_{m=K+1}^M$ so that
\[
	\frac{1}{\sqrt{B}}\begin{bmatrix} F	&	H_1	\\	G	&	H_2	\end{bmatrix}
\]
is a $(M+N-K)\times (M+N-K)$ unitary matrix.  Many of our proofs will consider Naimark complements from this standpoint of matrix completion.
\end{remark}

We now prove several properties for general Naimark complements which are similar to those for the usual Naimark complements of Parseval frames.  Note if $\{f_n\}_{n=1}^N$ is a Parseval frame, then the general Naimark complement is the same as the usual Naimark complement.  For this reason, we now drop the adjective ``general'' and refer to general Naimark complements of $B-$Bessel sequences simply as Naimark complements.  Also, given a $B-$Bessel sequence $\{f_n\}_{n=1}^N$ for $\cH_M$ with Naimark complement $\{g_n\}_{n=1}^N$ in $\cH_{N-K}$, we will assume the embedding discussed in remark \ref{rmk} so that $\cH_M \oplus \cH_{N-K}=\ell_2(M+N-K)$.

\begin{theorem} \label{prop}
If $\{f_n\}_{n=1}^N$ is a $B-$Bessel sequence in $\cH_M$ whose frame operator has eigenvalues $B=\lambda_1=\cdots=\lambda_K>\lambda_{K+1}\geq \cdots \geq \lambda_M$, a Naimark complement $\{g_n\}_{n=1}^N$ satisfies
the following.
\begin{enumerate}[(a)]
	\item \label{one} $\{f_n\oplus g_n\}_{n=1}^N$ is an orthogonal set with $ \norm{f_n\oplus g_n}^2= B$, $n=1,\ldots,N$.
	\item \label{two} $\spann (\{g_n\}_{n=1}^N)=\cH_M^{\perp}=\cH_{N-K} \subset \cH_{M+N-K}$.
	\item \label{three} We have uniqueness in the sense that if $\{\psi_n\}_{n=1}^N$ is another Naimark complement, then there exists a unitary operator $U$ such that $Ug_n=\psi_n$ for all $n=1,\ldots,N$.
\end{enumerate}
\end{theorem}

\begin{proof}
Result (\ref{one}) is clear since $f_n \oplus g_n = \sqrt{B}e_n$ for $n=1,\ldots N$ where $\{e_n\}_{n=1}^{M+N-K}$ is an orthonormal basis.

To show (\ref{two}), consider $\{f_n\}_{n=1}^N$ in its synthesis matrix form $F$ as well as the synthesis matrix $G$ of $\{g_n\}_{n=1}^N$.  Let $F':=\begin{bmatrix}F & H_1 \end{bmatrix}$ be the synthesis matrix for the canonical completion of $\{f_n\}_{n=1}^N$ to a $B-$tight frame.  Then by the definition of a Naimark complement, there exists $H_2$ so that by setting $G':=\begin{bmatrix}G&H_2\end{bmatrix}$,
\begin{equation}\label{mat}
	\frac{1}{\sqrt{B}}\begin{bmatrix}
 				F& H_1 \\
				G & H_2\\
			\end{bmatrix}
			=\frac{1}{\sqrt{B}}\begin{bmatrix}
				F' \\
				G'
			\end{bmatrix}
\end{equation}
is an $(M+N-K)\times (M+N-K)$ unitary matrix.  Note the columns of $G'$ span $\cH_M^{\perp}=\cH_{N-K}\subset\cH_{M+N-K}$ so that $\mbox{rank}(G')=N-K$.  We also have
\begin{align*}
	I_{M+N-K}	=\frac{1}{B}	
		\begin{bmatrix}
 				F^*		& G^* 	\\
				H^*_1	&H^*_2	
		\end{bmatrix}
		\begin{bmatrix}
 				F	& H_1 \\
				G & H_2	
		\end{bmatrix} 
					=\frac{1}{B}
		\begin{bmatrix}
 				F^*F+G^*G	& 0 								\\
				0 				& H_1^*H_1+H_2^*H_2	
		\end{bmatrix}
\end{align*}
so that the $N\times N$ Gram matrix of the Naimark complement $G^*G=BI_N-F^*F$.  As this is diagonalizable,
\begin{align} 
	G^*G	&= \mbox{diag}(B-\lambda_1,\ldots,B-\lambda_K,B-\lambda_{K+1},\ldots,B-\lambda_M,B,\ldots,B) \nonumber\\
				&= \mbox{diag}(0,\ldots,0,B-\lambda_{K+1},\ldots,B-\lambda_M,B,\ldots,B) \label{bnd}.
\end{align}
This shows $\mbox{rank}(G)=\mbox{rank}(G^*G)=N-K=\mbox{rank}(G')$.  It follows that $G$ and $G'$ have the same column space, and thus $\spann (\{g_n\}_{n=1}^N)=\cH_M^{\perp}=\cH_{N-K}\subset\cH_{M+N-K}$.

To prove (\ref{three}), given another Naimark complement $\{\psi_n\}_{n=1}^N$ with synthesis matrix $\Psi$, we consider unitary matrices as in (\ref{mat}).  That is there exist $H_2$ and $H'_2$ such that
\[
	U_1:=\frac{1}{\sqrt{B}}\begin{bmatrix}
 				F& H_1 \\
				G & H_2\\
			\end{bmatrix}
 \mbox{ and }
 	U_2:=\frac{1}{\sqrt{B}}\begin{bmatrix}
 				F& H_1 \\
				\Psi & H'_2\\
			\end{bmatrix}
\]  
are each unitary.  Thus there exists a unitary operator $V:\cH_M\oplus \cH_{N-K} \rightarrow \cH_M\oplus \cH_{N-K}$ such that $VU_1=U_2$.  However, since $V|_{\cH_M}=I$, we have $V=I\oplus U$ where $U:\cH_{N-K}\rightarrow \cH_{N-K}$ is unitary.  Thus $UG=\Psi$, and the result follows.
\end{proof}

Compared to usual Naimark complements of Parseval frames, Naimark complements of non-Parseval frames have a key difference.  Given a Parseval frame, its Naimark complement is a Parseval frame and therefore has frame bounds $A=B=1$.  For a general frame, the frame bounds of a Naimark complement may be very different compared to those of the original frame; however, we can calculate these bounds based on the eigenvalues of the original frame operator.

\begin{theorem}\label{bds}
If $\{f_n\}_{n=1}^N$ is a frame for $\cH_M$ whose frame operator has eigenvalues $B=\lambda_1=\cdots=\lambda_K>\lambda_{K+1}\geq \cdots \geq \lambda_M$, then a Naimark complement $\{g_n\}_{n=1}^N$ is a frame for $\cH_{N-K}$ with lower frame bound $B-\lambda_{K+1}$ and upper frame bound $B$ if $N\neq M$, and upper frame bound $B-\lambda_M$ if $N=M$. 
\end{theorem}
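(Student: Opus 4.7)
The plan is to extract the frame bounds of $\{g_n\}_{n=1}^N$ directly from the Gram matrix identity $G^*G = BI_N - F^*F$ that was already derived in the proof of Theorem \ref{prop}. Specifically, the frame bounds of $\{g_n\}_{n=1}^N$ acting on its span $\cH_M^\perp$ are the smallest and largest nonzero eigenvalues of $GG^*$, and these coincide with the smallest and largest nonzero eigenvalues of $G^*G$. So the whole problem reduces to a spectral calculation.

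First, I would observe that since $F$ is $M\times N$ with $N\ge M$ (required for $\{f_n\}$ to be a frame) and has rank $M$, the operator $F^*F$ on $\ell_2(N)$ shares its nonzero eigenvalues with $FF^*$, while contributing an extra $(N-M)$ zero eigenvalues. Thus the eigenvalues of $F^*F$, listed with multiplicity, are $\lambda_1,\lambda_2,\ldots,\lambda_M,0,\ldots,0$, with $N-M$ trailing zeros. Subtracting from $BI_N$ as in equation (\ref{bnd}) shows the eigenvalues of $G^*G$ are
\[
    B-\lambda_1,\; B-\lambda_2,\; \ldots,\; B-\lambda_M,\; \underbrace{B,\;\ldots,\;B}_{N-M \text{ times}}.
\]
Using the hypothesis $B=\lambda_1=\cdots=\lambda_K>\lambda_{K+1}\geq\cdots\geq\lambda_M$, the first $K$ entries vanish and the remaining $N-K$ eigenvalues are strictly positive: they consist of $B-\lambda_{K+1},\ldots,B-\lambda_M$ (which are monotonically non-decreasing in $m$) together with $N-M$ copies of $B$.

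Finally, I would separate into the two stated cases. If $N\neq M$, then $N>M$, so the value $B$ actually appears among the nonzero eigenvalues of $G^*G$, giving upper frame bound $B$; the smallest nonzero eigenvalue is then $\min_{K+1\le m\le M}(B-\lambda_m)=B-\lambda_{K+1}$, yielding the claimed lower bound. If $N=M$, there are no extra copies of $B$, so the largest nonzero eigenvalue is $\max_{K+1\le m\le M}(B-\lambda_m)=B-\lambda_M$, while the smallest remains $B-\lambda_{K+1}$.

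There is no serious obstacle here; the argument is essentially bookkeeping. The only point that requires care is to recognize that the eigenvalue $B$ of $G^*G$ arises from two entirely different sources—on the one hand from eigenvalues $\lambda_m=B$ of $FF^*$ contributing $B-\lambda_m=0$ (hence not to the nonzero spectrum), and on the other hand from the $N-M$ extra zero eigenvalues of $F^*F$ contributing $B-0=B$ (hence giving the upper bound precisely when $N>M$). This dichotomy is what forces the split into the cases $N\neq M$ and $N=M$.
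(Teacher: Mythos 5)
Your proposal is correct and is essentially the paper's own argument: the paper simply declares the result ``immediate from equation (\ref{bnd})'', which is exactly the diagonalized Gram matrix $G^*G=BI_N-F^*F$ whose nonzero eigenvalues you enumerate. Your write-up just makes explicit the bookkeeping (the $N-M$ extra zero eigenvalues of $F^*F$ contributing copies of $B$, versus the vanishing entries $B-\lambda_m$ for $m\le K$) that the paper leaves to the reader.
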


\begin{proof}
Letting $G$ be the synthesis operator of $\{g_n\}_{n=1}^N$, since the eigenvalues of the frame operator $GG^*$ are precisely the non-zero eigenvalues of the Gram matrix $G^*G$, this is immediate from equation (\ref{bnd}).
\end{proof}

The previous theorem shows a Naimark complement of a frame may not have a lower frame bound comparable to the lower frame bound of the original frame.  However, we can control the lower frame bounds by considering non-canonical completions of $\{f_n\}_{n=1}^N$ to a tight frame.  Notice a different completion will effect the isometry $\Phi^*$ given in the definition of a Naimark complement.  Specifically instead of adding the $M-K$ vectors $\{h_m\}_{m=K+1}^M=\{(B-\lambda_m)\varphi_m\}_{m=K+1}^M$ to create a $B-$tight frame, we may add the $M$ vectors $\{h_m\}_{m=1}^M=\{(C-\lambda_m)\varphi_m\}_{m=1}^M$ to produce a tight frame with any desired tight frame bound $C>B$.  In this case, $\Phi^*$ is now an isometry embedding $\cH_M$ into $\ell_2(M+N)$.  Identifying $\cH_M$ with $\Phi^*(\cH_M)$ and $\cH_{N}$ with $[\Phi^*(\cH_M)]^\perp$, arguments like those in Theorem \ref{prop} and Theorem \ref{bds} show a Naimark complement's frame vectors now span $\cH_{M}^\perp=\cH_N$, and this frame has upper frame bound $C$ and lower frame bound $C-B$.  

We also note completions with any other numbers of vectors cannot be considered if we desire a property comparable to (\ref{two}) from Theorem \ref{prop} to hold.  To be clear, suppose $\{f_n\}_{n=1}^N$ is a $B-$Bessel sequence and $\{f_n\}_{n=1}^N\cup \{h_m\}_{m=1}^R$ is any completion to a $C-$tight frame for $\cH_M$ with $C\geq B$.  Then the analysis operator for the associated Parseval frame is an isometry $\Phi^*$ embedding $\cH_M$ into $\ell_2(N+R)$.  We may define a Naimark complement $\{g_n\}_{n=1}^N$ in terms of this completion, but for (\ref{two}) from Theorem \ref{prop} to hold, we now require $\spann( \{g_n\}_{n=1}^N)=\cH_M^\perp=\cH_{N+R-M}$.  Letting $G$ be the synthesis matrix for $\{g_n\}_{n=1}^N$, we may calculate $G^*G$ similarly as in (\ref{bnd}).  It then follows that
\begin{equation*}
	N+R-M=\mbox{rank}(G)= \mbox{rank}(G^*G)= \begin{cases} N	&\mbox{if } C>B\\
																N-K	&\mbox{if } C=B.
									\end{cases}
\end{equation*}
Thus if $C>B$, we must use $R=M$ vectors for the completion of $\{f_n\}_{n=1}^N$, and if $C=B$, we must use $R=M-K$ vectors in the completion.  For the remainder of this paper, we consider the Naimark complement as originally defined where the completions to $B-$tight frames are canonical.


\section{Properties of the Naimark Complement}\label{frame}

In this section we will assume the given $B-$Bessel sequence has specific properties and show how these properties carry over to Naimark complements.

\begin{proposition} \label{properties}
Given a $B-$Bessel sequence $\{f_n\}_{n=1}^N$ for $\cH_M$ with  Naimark complement $\{g_n\}_{n=1}^N$, the following properties hold.
\begin{enumerate}[(a)]
	\item \label{one2} $\inner{g_n}{g_{n'}} = -\inner{f_n}{f_{n'}}$ for all $1\leq n \neq n' \leq N$. In particular, if $\{f_n\}_{n=1}^N$ is an equiangular frame, so is
	$\{g_n\}_{n=1}^N$.
	\item \label{three2} If $J\subset \{1,2,\ldots,N\}$ and $\{f_n\}_{n\in J}$ is an orthogonal set, then so is $\{g_n\}_{n\in J}$.
	\item \label{two2} If $J\subset \{1,2,\ldots,N\}$ and $\{f_n\}_{n\in J}$ is an equal-norm set, then so is $\{g_n\}_{n\in J}$.
\end{enumerate} 
\end{proposition}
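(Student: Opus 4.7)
The plan is to deduce all three properties from a single Gram-matrix identity, namely
\[ G^*G = B I_N - F^*F, \]
which was already derived inside the proof of Theorem \ref{prop}: there, unitarity of the extended $(N+M-K)\times(N+M-K)$ matrix involving $F, G, H_1, H_2$ forces its adjoint-product to equal the identity, and the top-left $N \times N$ block of that product is exactly $\frac{1}{B}(F^*F + G^*G)$ --- the auxiliary completion vectors $H_1, H_2$ contribute only to the lower-right block.

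Reading off the $(n', n)$ entry of this identity, and using that the $(n', n)$ entry of a Gram matrix $F^*F$ is $\inner{f_n}{f_{n'}}$ (and analogously for $G^*G$), we obtain
\[ \inner{g_n}{g_{n'}} = B\, \delta_{n, n'} - \inner{f_n}{f_{n'}}. \]
For $n \neq n'$ this is exactly the content of (\ref{one2}); in particular, whenever $|\inner{f_n}{f_{n'}}|$ is constant over $n \neq n'$, so is $|\inner{g_n}{g_{n'}}|$, which combined with the equal-norm conclusion below gives the equiangular statement. Setting $n = n'$ yields the diagonal identity $\|g_n\|^2 = B - \|f_n\|^2$.

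Part (\ref{three2}) follows immediately from (\ref{one2}): if $\inner{f_n}{f_{n'}} = 0$ for all distinct $n, n' \in J$, the same vanishing holds for the $g_n$'s restricted to $J$. Part (\ref{two2}) follows from the diagonal identity: if $\|f_n\|$ is constant on $J$, then so is $\|g_n\| = \sqrt{B - \|f_n\|^2}$.

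There is no substantive obstacle; the only bookkeeping point is verifying that the top-left block of the relevant block-matrix product genuinely depends only on $F$ and $G$, with $H_1, H_2$ playing no role, which is transparent from block multiplication. In short, the entire proposition is an entrywise corollary of the single Gram-matrix relation $G^*G = BI_N - F^*F$ that already fell out of the construction of the Naimark complement itself.
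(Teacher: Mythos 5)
Your proof is correct and is essentially the paper's argument in matrix form: the identity $G^*G = BI_N - F^*F$ is precisely the statement that $\{f_n \oplus g_n\}_{n=1}^N$ is orthogonal with $\norm{f_n \oplus g_n}^2 = B$ (Theorem \ref{prop}(\ref{one})), and the paper reads off the same off-diagonal and diagonal consequences vector-by-vector rather than entrywise from the Gram matrix. No gaps.
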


\begin{proof}
For (\ref{one2}), since $\{f_n\oplus g_n\}_{n=1}^N$ is an orthogonal set, for all $1 \leq n \neq n' \leq N$ we have
\[ 
	0 = \inner{f_n \oplus g_n}{f_{n'} \oplus g_{n'}} = \inner{f_n}{f_{n'}} + \inner{g_n}{g_{n'}},
\]
giving the result.

Now (\ref{three2}) is immediate from (\ref{one2}).

To prove (\ref{two2}), set $\norm{f_n}=c$, for all $n\in J$.  Then
\[
	1=\frac{1}{B}(\norm{f_n}^2 + \norm{g_n}^2)
\]
so that $\norm{g_n}^2 = B-c^2$ for $n\in J$.
\end{proof}

The restricted isometry property was introduced by Candes and Tao in their pivotal paper \cite{CaT} and is an important property often leveraged in compressed sensing.

\begin{definition}
A family of unit norm vectors $\{f_n\}_{n=1}^N$ in $\cH_M$ has the $(L,\delta)$-{\it restricted isometry property} (RIP) if for every $ J \subseteq \{1,\ldots,N\}$, $\abs{J} \leq  L$ and all scalars $\{a_n\}_{n\in J}$ 
we have
\[ 
	(1-\delta)\sum_{n\in J}\abs{a_n}^2 \leq\norm{\sum_{n\in J}a_nf_n}^2\leq(1+\delta)\sum_{n\in J} \abs{a_n}^2.
\]
\end{definition}

Given a frame with the restricted isometry property, we may calculate RIP bounds for a Naimark complement in terms of the upper frame bound and RIP bounds of the original frame.

\begin{theorem}
Let $\{f_n\}_{n=1}^N$ be a  frame with upper frame bound $B$ and Naimark complement $\{g_n\}_{n=1}^N$.  If $\{f_n\}_{n=1}^N$ has the $(L,\delta)-$RIP, then $\{\frac{1}{\sqrt{B-1}}g_n\}_{n=1}^N$ has the $(L,\frac{\delta}{B-1})-$RIP.
\end{theorem}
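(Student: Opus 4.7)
The plan is to exploit the orthogonality relation $\{f_n \oplus g_n\}_{n=1}^N$ from Theorem \ref{prop}(\ref{one}) to convert bounds on $\|\sum_{n\in J} a_n f_n\|^2$ into bounds on $\|\sum_{n\in J} a_n g_n\|^2$. Since the RIP is a hypothesis, the vectors $f_n$ are unit norm, so Proposition \ref{properties}(\ref{two2}) with $c=1$ immediately gives $\|g_n\|^2 = B-1$ for every $n$. In particular, the normalization factor $\frac{1}{\sqrt{B-1}}$ makes $\{\frac{1}{\sqrt{B-1}}g_n\}$ a unit norm family, which is the first thing the RIP definition requires.

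For the RIP inequalities, I would fix $J \subseteq \{1,\ldots,N\}$ with $|J| \le L$ and scalars $\{a_n\}_{n\in J}$. The key identity comes from orthogonality and the equal norm $\|f_n \oplus g_n\|^2 = B$:
\[
\Bigl\|\sum_{n\in J} a_n(f_n\oplus g_n)\Bigr\|^2 = B\sum_{n\in J}|a_n|^2,
\]
while the same quantity splits as $\|\sum_{n\in J} a_n f_n\|^2 + \|\sum_{n\in J} a_n g_n\|^2$. Rearranging yields
\[
\Bigl\|\sum_{n\in J} a_n g_n\Bigr\|^2 = B\sum_{n\in J}|a_n|^2 - \Bigl\|\sum_{n\in J} a_n f_n\Bigr\|^2.
\]
Applying the $(L,\delta)$-RIP hypothesis to the right hand side converts the two-sided bound on the $f$-sum into a two-sided bound
\[
(B-1-\delta)\sum_{n\in J}|a_n|^2 \le \Bigl\|\sum_{n\in J} a_n g_n\Bigr\|^2 \le (B-1+\delta)\sum_{n\in J}|a_n|^2.
\]

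Dividing through by $B-1$ (which is positive since $B\ge 1+\delta$ from the RIP upper bound applied to a single unit vector, so in particular $B>1$) gives exactly the $(L,\tfrac{\delta}{B-1})$-RIP inequality for $\{\frac{1}{\sqrt{B-1}}g_n\}_{n=1}^N$. There is no real obstacle here; the only subtle point to note is that the factor $\frac{1}{B-1}$ correctly divides both the gap parameter $\delta$ and the norm-squared of the sum, so the RIP pair transforms as claimed. The argument relies only on Theorem \ref{prop}(\ref{one}) and Proposition \ref{properties}(\ref{two2}), both already established.
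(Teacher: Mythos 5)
Your proposal is correct and follows essentially the same route as the paper: the same orthogonality identity $B\sum_{n\in J}|a_n|^2 = \|\sum_{n\in J}a_nf_n\|^2 + \|\sum_{n\in J}a_ng_n\|^2$, the same unit-norm check $\|g_n\|^2 = B-1$, and the same division by $B-1$ to obtain the $(L,\tfrac{\delta}{B-1})$-RIP bounds. Your explicit remark that $B>1$ (so the division is legitimate) and your correctly stated upper bound $1+\tfrac{\delta}{B-1}$ are small improvements over the paper's write-up, which leaves the positivity implicit and has a typographical slip ($\tfrac{\delta}{\sqrt{B-1}}$) in its final display.
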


\begin{proof}
Let $\{g_n\}_{n=1}^N$ be a Naimark complement of $\{f_n\}_{n=1}^N$.  Now, $\{\frac{1}{\sqrt{B-1}}g_n\}_{n=1}^N$ is unit norm since $B = \norm{f_n\oplus g_n}^2 = \norm{f_n}^2 +\norm{g_n}^2 = 1+\norm{g_n}^2$. For any $J\subset \{1,2,\ldots,N\}$ and any scalars $\{a_n\}_{n\in J}$, we have
\[ 
	B\sum_{n\in J} \abs{a_n}^2 = \norm{\sum_{n\in J}a_nf_n}^2 + \norm{\sum_{n\in J} a_ng_n}^2.
\]
Hence,
\begin{align*}
	\norm{\sum_{n\in J} a_ng_n}^2 	&= B\sum_{n\in J}\abs{a_n}^2 - \norm{\sum_{n\in J}a_nf_n}^2	\\
																&\geq B\sum_{n\in J}\abs{a_n}^2 - (1+\delta)\sum_{n\in J}\abs{a_n}^2	\\
																&= ((B-1)-\delta)\sum_{n\in J}\abs{a_n}^2.
\end{align*}
Dividing through this inequality by $B-1$ yields
\[
	\norm{\sum_{n\in J} a_n\frac{g_n}{\sqrt{B-1}}}^2\geq \left ( 1- \frac{\delta}{B-1}\right ) \sum_{n\in J}\abs{a_n}^2.
\]
Similarly,
\[ 
	\norm{\sum_{n\in J}a_n\frac{g_n}{\sqrt{B-1}}}^2 \leq \left ( 1 + \frac{\delta}{\sqrt{B-1}}\right ) \sum_{n\in J}\abs{a_n}^2
\]
completing the proof.
\end{proof}


\section{Fusion Frames}

We next consider general Naimark complements in terms of fusions frames.  As a frame operator may be viewed as a sum of weighted orthogonal projections onto the one-dimensional spans of the frame vectors, fusion frames generalize this to weighted projections onto subspaces of arbitrary dimensions.  Specifically, let $\{W_\ell\}_{\ell=1}^L$ be a family of subspaces in the Hilbert space $\cH_M$, and let $\nu_\ell>0$, $\ell=1,2,\ldots,L$ be positive weights.  Then $\{W_\ell,\nu_\ell\}_{\ell=1}^L$ is a {\em fusion frame} for $\cH_M$ if there are constants $0<A\leq B < \infty$ so that
\[ 
	A \|f\|^2 \le \sum_{\ell=1}^L \nu_\ell^2\|P_\ell f\|^2 \le B \|f\| \mbox{ for all } f\in \cH_M
\]
where $P_\ell$ is the orthogonal projection onto $W_\ell$.  We call $A,B$ the {\em fusion frame bounds}, and if $A=B=1$, this is a {\em Parseval fusion frame}.  The {\em fusion frame operator} $S:\cH_M\rightarrow \cH_M$ is then given by
\[
	Sf=\sum_{\ell=1}^L \nu_\ell^2 P_\ell f.
\]

Notice if we let $\{f_{\ell j}\}_{j=1}^{D_\ell}$ be an orthonormal basis for $W_\ell$ and consider the frame $F=\{\nu_\ell f_{\ell d}\}_{\ell=1,d=1}^{L,\ D_\ell}$, we have
\[
	Sf=\sum_{\ell=1}^L \nu_\ell^2 P_\ell f = \sum_{\ell=1}^L \sum_{d=1}^{D_\ell} \nu_\ell^2 \inner{f}{f_{\ell d}}f_{\ell d}= \sum_{\ell=1}^L \sum_{d=1}^{D_\ell} \inner{f}{{\nu_\ell}f_{\ell d}}{\nu_\ell}f_{\ell d}=FF^*f.
\]
Thus every fusion frame arises from a traditional frame with extra orthogonality conditions among the frame vectors.  This leads to the definition of a Naimark fusion frame for a Parseval fusion frames in a natural way.  We omit the standard definition of a Naimark fusion frame for Parseval frames and instead imediately discuss the generalized version.

\begin{definition} \label{fusdef}
Let $\{W_\ell,\nu_\ell\}_{\ell=1}^L$ be a fusion frame for $\cH_M$ with frame bounds $A,B$.  Choose orthonormal bases $\{f_{\ell d}\}_{d=1}^{D_\ell}$ for $W_\ell$, and consider the frame $\{\nu_\ell f_{\ell d}\}_{\ell=1,d=1}^{L,\ D_\ell}$ for $\cH_M$.  Construct the general Naimark complement $\{g_{\ell d}\}_{\ell=1,d=1}^{L,\ D_\ell}$ as in Definition \ref{naimarkdef}.  Then for each $\ell=1,2,\ldots,L$, $\{g_{\ell d}\}_{d=1}^{D_\ell}$ is an orthogonal set where each vector has norm $\sqrt{B-\nu_\ell^2}$.  Set
\[
	W'_\ell = \spann(\{g_{\ell d}\}_{d=1}^{D_\ell}).
\]
A Naimark fusion frame is then given by $\{W'_\ell,\sqrt{B-\nu_\ell^2}\}_{\ell=1}^L$. 
\end{definition}

\begin{remark}\label{rmk2}
As with Naimark complements discussed in Remark \ref{rmk}, this Naimark fusion frame is formally embedded in $\ell_2(M+N-K)$ where $N=\sum_{\ell=1}^L D_\ell$.  For fusion frames however, the embedding is not unique.  Indeed, the analysis operator $\Phi^*: \cH_{M}\rightarrow \ell_2(M+N-K)$ for the Parseval frame $\{\frac{1}{\sqrt{B}}f_{\ell d}\}_{\ell=1,d=1}^{L,\ D_\ell} \cup \{\frac{1}{\sqrt{B}}h_m\}_{m=K+1}^{M}$ now depends on the choice of orthonormal basis $\{f_{\ell d}\}_{d=1}^{D_\ell}$ for each subspace $W_\ell$.  However, once we fix an orthonormal bases for each $W_\ell$, $\Phi^*$ is determined, and we may identify $\cH_M$ with $\Phi^*(\cH_M)$ so that $\{W_\ell,\nu_\ell\}_{\ell=1}^L$ is a fusion frame for $\cH_M\subseteq \ell_2(M+N-K)$ and $\{W'_\ell,\sqrt{B-\nu_\ell^2}\}_{\ell=1}^L$ is a Naimark fusion frame for $\cH_{N-K}\subseteq \ell_2(M+N-K)$ where $\cH_{M} \oplus \cH_{N-K}=\ell_2(M+N-K)$.  This again allows us to approach Naimark fusion frames from the perspective of matrix completion.  Further, no matter what orthonormal bases are chosen for the $W_\ell$, the Naimark fusion frames are unitarily equivalent as we show in the next theorem.  The proof is a version of an unpublished argument of Jameson Cahill and Dustin Mixon which we use with their permission.
\end{remark}

\begin{theorem} \label{unitequiv}
Let $\{W_\ell,\nu_\ell\}_{\ell=1}^L$ be a fusion frame for $\cH_M$ with frame bounds $A,B$ and with Naimark fusion frames $\{W'_\ell,\sqrt{B-\nu_\ell^2}\}_{\ell=1}^L$ and $\{Z'_\ell,\sqrt{B-\nu_\ell^2}\}_{\ell=1}^L$.  Then there exists a unitary operator $U$ such that $UZ'_\ell=W'_\ell$ for all $\ell=1,\cdots,L$.
\end{theorem}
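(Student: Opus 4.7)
The plan is to reduce the problem to the frame uniqueness statement in Theorem~\ref{prop}(\ref{three}) by exploiting a block-diagonal change of basis on the index side. Specifically, if $\{f_{kj}\}_{j=1}^{d_k}$ and $\{\tilde f_{kj}\}_{j=1}^{d_k}$ are the two orthonormal bases of $W_k$ used to build $\{W'_k,\sqrt{B-\nu_k^2}\}_{k=1}^K$ and $\{Z'_k,\sqrt{B-\nu_k^2}\}_{k=1}^K$ respectively, then for each $k$ there is a unitary $V^{(k)}$ on $W_k$ with $\tilde f_{kj}=\sum_\ell V^{(k)}_{j\ell}f_{k\ell}$. Assembling the $V^{(k)}$ into a single block-diagonal unitary $V$ on $\ell_2(\sum_k d_k)$, the two synthesis matrices $F,\tilde F$ of the frames $\{\nu_k f_{kj}\}$ and $\{\nu_k \tilde f_{kj}\}$ satisfy $\tilde F=FV$ (up to transposing $V^{(k)}$, a purely cosmetic issue).

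Next I would observe that $\tilde F\tilde F^*=FVV^*F^*=FF^*$, so the two frames share the same frame operator and, in particular, the same eigenvalues $\lambda_1,\ldots,\lambda_M$ and value $K$. Hence the construction of Definition~\ref{naimarkdef} completes each to a $B$-tight frame by appending the \emph{same} collection of vectors $\{h_m\}_{m=K+1}^M$ (fixing any joint choice of eigenvectors), and produces Naimark complements $G$ and $\tilde G$ living in the same ambient space $\cH_{M+N-K}$ and spanning the same subspace $\cH_M^\perp$ by Theorem~\ref{prop}(\ref{two}).

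The key computation is then at the level of Gram matrices. Using the identity $G^*G=BI_N-F^*F$ from the proof of Theorem~\ref{prop},
\[
	\tilde G^*\tilde G=BI_N-\tilde F^*\tilde F=V^*(BI_N-F^*F)V=V^*G^*GV=(GV)^*(GV).
\]
Thus $\tilde G$ and $GV$ have identical Gram matrices, and because both have the same span $\cH_M^\perp$ (the column space of $G$ is unaltered by right-multiplication by the invertible $V$), the standard argument used to prove Theorem~\ref{prop}(\ref{three}) furnishes a unitary $U$ on $\cH_M^\perp$ with $U\tilde g_n=(GV)_n$ for every $n$.

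Finally, I would exploit the block structure of $V$ to conclude. Since $V$ is block-diagonal with the $k$th block acting only within the indices corresponding to $W_k$, the $(k,j)$ column of $GV$ is a linear combination of the columns $g_{k,1},\ldots,g_{k,d_k}$ of $G$. Therefore
\[
	UZ'_k=U\,\spann\{\tilde g_{kj}:1\le j\le d_k\}=\spann\{(GV)_{kj}:1\le j\le d_k\}=\spann\{g_{k\ell}:1\le \ell\le d_k\}=W'_k,
\]
which is the conclusion. The main obstacle is really Step~3: setting up the Gram-matrix identity cleanly and invoking frame uniqueness to obtain a \emph{single} unitary $U$ (rather than one per block), which is what makes this a genuine statement about the Naimark fusion frame rather than about its individual component subspaces. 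Everything else is bookkeeping arising from the block structure of $V$.
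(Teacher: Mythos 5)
Your proposal is correct and takes essentially the same route as the paper: the paper also passes between the two choices of orthonormal bases via a block-diagonal unitary (observing that $G'=\{G_kU_k\}$ is a Naimark complement of $F'=\{F_kU_k\}$ since the completed matrix stays unitary), then invokes Theorem~\ref{prop}(\ref{three}) to obtain a single unitary and concludes from $\spann(G_kU_k)=\spann(G_k)=W'_k$. Your Gram-matrix identity $\tilde G^*\tilde G=(GV)^*(GV)$ merely inlines the computation underlying Theorem~\ref{prop}(\ref{three}) rather than citing it wholesale, which is a cosmetic difference.
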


\begin{proof}
For each $W_\ell$, fix an orthonormal basis $\{f_{\ell d}\}_{d=1}^{D_\ell}$ and let $F_\ell$ by the $M \times D_\ell$ synthesis matrix for $\{\nu_\ell f_{\ell d}\}_{d=1}^{D_\ell}$.  Let $\{g_{\ell d}\}_{\ell =1, d=1}^{L, \ D_\ell}$ be a Naimark complement of $\{\nu_\ell f_{\ell d}\}_{\ell=1, d=1}^{L, \ D_\ell}$ as in Definition \ref{fusdef} so that $\spann(\{g_{\ell d}\}_{d=1}^{D_\ell})=W'_\ell$.  Let $G_\ell$ be the $(N-K)\times D_\ell$ synthesis matrix for $\{g_{\ell d}\}_{d=1}^{D_\ell}$.  Then in terms of matrix completion, there exists $H_2$ such that
\[
	\frac{1}{\sqrt{B}}\begin{bmatrix}
		F_1	&	F_2	&\cdots	&F_K	&H_1	\\
		G_1	&	G_2	&\cdots	&G_K	&H_2	
	\end{bmatrix}
\]
is a unitary matrix.  Now suppose we choose a different orthonormal basis for each $W_\ell$.  That is, let $U_\ell$ be a $D_\ell \times D_\ell$ unitary matrix, and take the (scaled) orthonormal basis of $W_\ell$ to be the column vectors of the synthesis matrix $F_\ell U_\ell$ for each $\ell=1,\ldots,L$.  If we also consider $G_\ell U_\ell$, notice
\[
	\frac{1}{\sqrt{B}}\begin{bmatrix}
		F_1U_1		&\cdots	&F_LU_L	&H_1	\\
		G_1U_1		&\cdots	&G_LU_L	&H_2	
	\end{bmatrix}
	=		\frac{1}{\sqrt{B}}\begin{bmatrix}
		F_1	&	F_2	&\cdots	&F_K	&H_1	\\
		G_1	&	G_2	&\cdots	&G_K	&H_2	
	\end{bmatrix}
	\begin{bmatrix}
		U_1	&			&				&			&\\
				&U_2	&				&			&\\
				&			&\ddots	&			&\\
				&			&				&U_L	&\\
				&			&				&			&I\\
	\end{bmatrix}.
\]
This is a unitary matrix since the right hand side of this equality is a product of two unitary matrices.  Thus a frame with synthesis matrix $\begin{bmatrix} F_1U_1 & \cdots &F_LU_L \end{bmatrix}$ has a Naimark complement with synthesis matrix $\begin{bmatrix} G_1U_1 & \cdots &G_LU_L \end{bmatrix}$.  Further, as the range of each synthesis matrix is the span of the frame vectors, we have $\mbox{range}(G_\ell U_\ell)=\mbox{range}(G_\ell)=W'_\ell$ for all $\ell=1,\ldots,L$. 

Now let $\{Z'_\ell,\sqrt{B-\nu_\ell^2}\}_{\ell=1}^L$ be any Naimark fusion frame of $\{W_\ell,\sqrt{B-\nu_\ell^2}\}_{\ell=1}^L$.  The subspaces $Z'_\ell$ arise from $Z'_\ell=\spann(\{z_{\ell d}\}_{d=1}^{D_\ell})$ where $\{z_{\ell d}\}_{\ell=1,d=1}^{L, \ D_\ell}$ is a Naimark complement of a frame with a synthesis matrix of the form $\begin{bmatrix} F_1U_1 & \cdots &F_LU_L \end{bmatrix}$.  By Theorem \ref{prop}(\ref{three}), there exists some unitary operator $U$ such that $UZ'_\ell=\mbox{range}(G_\ell U_\ell)$ for all $\ell=1,\ldots, L$.  Thus
\[
	UZ'_\ell=\mbox{range}(G_\ell U_\ell)=\mbox{range}(G_\ell)=W'_\ell
\]
for all $\ell=1,\ldots,L$ completing the proof.
\end{proof}

There are many ways to measure the distance between two subspaces of a Hilbert space.  The most exact measure comes from the {\em principal angles}.  Intuitively we find two unit norm vectors (one in each subspace) with the minimal angle formed between them. Then we consider the orthogonal complements of these vectors in their respective subspaces, and find the closest two unit norm vectors in these subspaces.  We continue in this manner until one of the orthogonal complements is zero.  The formal definition follows. For notation, if $W$ is a subspace of $\cH_M$, we write $S_{W} = \{f\in W:\norm{f}=1\}$. 

\begin{definition}
Given two subspaces $W_1,W_2$ of $\cH_M$ with dim $W_1=d_1\le$ dim $W_2=d_2$, the {\em principal angles} $(\theta_1,\theta_2,\ldots \theta_{d_1})$ between the subspaces are defined as
\[ 
	\theta_1 =\min\{\arccos \inner{f}{g}: f\in S_{W_1},g\in S_{W_2}\}.
\]
Two vectors $f_1,g_1$ are called {\em principal vectors} if they give the minimum above. The remaining principal angles and vectors are defined recursively via
\[ 
	\theta_j = \min\{\arccos\ \langle f,g\rangle: f\in S_{W_1},g\in S_{W_2},  \mbox{ and } f\perp f_\ell,g\perp g_\ell, 1\le \ell \le j-1.\}
\]
\end{definition}

Now we will consider how principal angles are passed from a fusion frame to the Naimark fusion frame.  In definition \ref{fusdef}, for $\ell=1,\ldots,L$, we choose $\{f_{\ell d}\}_{d=1}^{D_\ell}$ as an orthonormal basis for each subspace $W_\ell$.  By (\ref{three2}) of Theorem \ref{properties}, the Naimark fusion frame subspace $W'_\ell$ must also have dimension $D_\ell$.  Thus the number of principal angles and vectors between $W_n$ and $W_m$ compared to those between $W'_n$ and $W'_m$, $1\leq n,m\leq L$ must be the same.  To calculate these principal angles and vectors for the Naimark fusion frame, we will use the following theorem which we prove for Parseval fusion frames.

\begin{theorem}\label{L1}
Let $\{W_\ell,\nu_\ell\}_{\ell=1}^L$ be a Parseval fusion frame for $\cH_M$ with $\dim W_\ell=D$ for all $\ell=1,2,\ldots,L$, and let $\{W'_\ell,\sqrt{1-\nu_\ell^2}\}_{\ell=1}^L$ be a Naimark complement.  Fix two subspaces, say $W_{1}, W_{2}$, and suppose $\{\theta_d\}_{d=1}^D$ are the associated principal angles.  Let $N=DL$, and assume we have an embedding of the fusion frame into $\ell_2(N)$ (see remark \ref{rmk2}) with $P$ the orthogonal projection of $\ell_2(N)$ onto $\cH_M$. Let $\{e_{\ell d}\}_{\ell=1,d=1}^{L\ D}$ be an orthonormal basis for $\ell_2(N)$ which satisfies
\begin{enumerate}[(a)]
	\item $\{\frac{1}{\nu_\ell}Pe_{\ell d}\}_{d=1}^D$, $\ell=1,2$ are the principal vectors for $W_1,W_2$.
	\item $W_\ell =\spann \{Pe_{\ell d}\}_{d=1}^D$, for all $\ell=1,2,\ldots,L$.
\end{enumerate}
Then the complementary fusion frame subspaces $W'_{1}, W'_{2}$ have principal vectors $\{\frac{1}{\sqrt{1-\nu_\ell^2}}(I-P)e_{\ell d}\}_{d=1}^D$, $\ell=1,2$ and principal angles
\[
	\{\theta_d'\}_{d=1}^D=\left \{ \arccos\left [ \frac{\nu_1}{\sqrt{1-\nu_1^2}}\frac{\nu_2}{\sqrt{1-\nu_2^2}} \cos(\theta_d) \right ]\right \}_{d=1}^D.
\]
\end{theorem}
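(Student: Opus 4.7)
The plan is to exploit the Naimark embedding directly: after the identification made in the Naimark theorem, $Pe_{kj}=\nu_k f_{kj}$ for an orthonormal basis $\{f_{kj}\}_{j=1}^d$ of $W_k$ (chosen via hypothesis~(a) so that for $k=1,2$ the $f_{kj}$'s realize the principal vectors of $W_1, W_2$), and the Naimark complement vectors are exactly $(I-P)e_{kj}$, whose span equals $W'_k$. Every principal-angle computation then reduces to inner products among the $e_{kj}$'s and $Pe_{kj}$'s.

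First I would verify that $\{\tfrac{1}{\sqrt{1-\nu_k^2}}(I-P)e_{kj}\}_{j=1}^d$ is an orthonormal basis for $W'_k$. Using $\langle(I-P)x,(I-P)y\rangle=\langle x,y\rangle-\langle Px,Py\rangle$ (because $I-P$ is an orthogonal projection), together with $\|Pe_{kj}\|=\nu_k$ and $\langle f_{kj},f_{kj'}\rangle=\delta_{jj'}$, the squared norms equal $1-\nu_k^2$ and the vectors are pairwise orthogonal within each $k$. Next, for the cross inner products between the candidate principal vectors of $W'_1$ and $W'_2$, hypothesis~(a) gives $\langle f_{1j},f_{2j'}\rangle=\cos(\theta_j)\delta_{jj'}$, and since $\langle e_{1j},e_{2j'}\rangle=0$, the same projection identity yields
\[
\left\langle \frac{(I-P)e_{1j}}{\sqrt{1-\nu_1^2}},\,\frac{(I-P)e_{2j'}}{\sqrt{1-\nu_2^2}}\right\rangle
= -\,\frac{\nu_1\nu_2\cos(\theta_j)}{\sqrt{(1-\nu_1^2)(1-\nu_2^2)}}\,\delta_{jj'}.
\]
Thus the cross-Gram matrix of the two candidate ONBs is diagonal.

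Finally I would invoke the standard principle that two ONBs of subspaces whose cross-Gram matrix is diagonal are automatically principal vectors, with the principal angles equal to the arccosines of the absolute values of the diagonal entries. This follows directly from the recursive definition stated in the paper: expanding arbitrary unit $u\in W'_1, v\in W'_2$ in the candidate bases as $u=\sum a_j u_j$, $v=\sum b_j v_j$ gives $|\langle u,v\rangle|=\bigl|\sum a_j\overline{b_j}c_j\bigr|\le\max_j|c_j|$, with equality realized at the corresponding pair; the orthogonality-to-previously-chosen-vectors constraint simply restricts the expansion to the remaining indices, so the recursion plays out in the obvious way. Sorting the $\theta_j$'s in increasing order (so the $|c_j|$'s are decreasing) and absorbing the overall sign into the sign ambiguity of principal vectors yields
\[
\theta'_j=\arccos\!\left[\frac{\nu_1\nu_2\cos(\theta_j)}{\sqrt{(1-\nu_1^2)(1-\nu_2^2)}}\right].
\]

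The main subtlety is the sign issue: the raw cross inner product is negative, while the paper's principal angles lie in $[0,\pi/2]$, so one must either replace $(I-P)e_{1j}$ by its negative or read off $|c_j|$ before applying $\arccos$; this is the only place the argument requires care, and it is genuinely just a bookkeeping point rather than a substantive obstacle.
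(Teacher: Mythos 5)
Your proposal is correct and follows essentially the same route as the paper: both arguments hinge on the identity $\langle (I-P)e_{1j},(I-P)e_{2j'}\rangle=-\langle Pe_{1j},Pe_{2j'}\rangle$ and on the bi-orthogonality of the principal vectors of $W_1,W_2$ to reduce the optimization in $W'_1,W'_2$ to the known angles, with your diagonal cross-Gram packaging being just a cleaner organization of the paper's ``first principal angle, then iterate'' computation. Your explicit handling of the negative sign (absorbing it into the sign ambiguity of the principal vectors) addresses a point the paper's proof silently glosses over.
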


\begin{proof}
We will find the first principal vectors and associated principal angle between $W'_1$ and $W'_2$; the result will follow by iteration of the argument.
To identify the first principal vectors we need to maximize
\[ 
	\{\inner{f}{g}: f\in S_{W'_1},g\in S_{W'_2}\}.
\]
That is, we need to maximize
\begin{equation}\label{E1} 
	 \left \langle \sum_{d=1}^D \frac{a_d}{\sqrt{1-\nu_1^2}}(I-P)e_{1d}, \sum_{d'=1}^D\frac{b_{d'}}{\sqrt{1-\nu_2^2}}(I-P)e_{2d'}\right \rangle,
\end{equation}
subject to the constraints $\sum_{d=1}^D \abs{a_d}^2 = \sum_{d'=1}^D \abs{b_{d'}}^2 =1$.  From equation (\ref{E1}), we need to maximize
\begin{align}
 	\frac{1}{\sqrt{1-\nu_1^2}}	&\frac{1}{\sqrt{1-\nu_2^2}} \sum_{d=1}^D\sum_{d'=1}^D a_d\overline{b_{d'}}\langle (I-P)e_{1d},(I-P)e_{2d'}\rangle \notag\\
															&=- \frac{1}{\sqrt{1-\nu_1^2}}\frac{1}{\sqrt{1-\nu_2^2}} \sum_{d=1}^D\sum_{d'=1}^D a_d\overline{b_{d'}}\langle Pe_{1d},Pe_{2d'}\rangle \label{E2}\\
															&=- \frac{\nu_1}{\sqrt{1-\nu_1^2}}\frac{\nu_2}{\sqrt{1-\nu_2^2}} \left \langle \sum_{d=1}^D \frac{a_d}{\nu_1}Pe_{1d},\sum_{d'=1}^D \frac{b_{d'}}{\nu_2}Pe_{2d'} \right \rangle. \notag
\end{align}
However, since $\{\frac{1}{\nu_\ell}Pe_{\ell d}\}_{d=1}^D$, $\ell=1,2$, are the principal vectors for $W_1,W_2$, the maximum in equation (\ref{E2}) is precisely
\[ 
	\frac{\nu_1}{\sqrt{1-\nu_1^2}}\frac{\nu_2}{\sqrt{1-\nu_2^2}}\cos( \theta_d).
\]
Observing that the inner product
\[ 
	\left \langle \frac{1}{\sqrt{1-\nu_1^2}}(I-P)e_{11},\frac{1}{\sqrt{1-\nu_2^2}}(I-P)e_{21} \right \rangle.
\]
yields precisely this value, these are the principal vectors associated with $\theta'_1$.
\end{proof}

Note it is not necessary to have subspaces of equal dimension since the same proof holds.  Now the result easily generalizes to any fusion frame $\{W_\ell,\nu_\ell\}_{\ell=1}^L$ with fusion frame bounds $A,B$.  In this case, after we pick orthonormal bases for each subspace, weight them by the appropriate $\nu_\ell$, complete to a $B$-tight frame, and normalize  by $1/\sqrt{B}$ to obtain a Parseval frame Theorem \ref{L1} applies where we simply replace $\nu_\ell$ by $\nu_\ell/\sqrt{B}$.

\begin{corollary}\label{cor1}
Let $\{W_\ell,\nu_\ell\}_{\ell=1}^L$ be a fusion frame for $\cH_M$ with $\dim W_\ell=D_\ell$ for each $\ell=1,2,\ldots,L$ and upper frame bound $B$.  Let $\{W'_\ell,\sqrt{B-\nu_\ell^2}\}_{\ell=1}^L$ be a Naimark fusion frame.  Fix subspaces, say $W_1,W_{2}$ where $D_1\leq D_{2}$, and suppose $\{\theta_{d}\}_{d=1}^{D_1}$ are the associated principal angles.  Then the principal angles for the subspaces $W'_1,W'_{2}$ are
\[ 
	\{\theta'_d\}_{d=1}^{D_1}=\left \{\arccos \left [ \frac{\nu_1}{\sqrt{B-\nu_1^2}}\frac{\nu_{2}}{\sqrt{B-\nu_{2}^2}} \cos(\theta_{d})\right ] \right \}_{d=1}^{D_1}.
\]
\end{corollary}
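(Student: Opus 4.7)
The plan is to reduce directly to the Parseval-frame version already established in Theorem \ref{L1}, using the normalization described in the paragraph immediately preceding the corollary. Starting from $\{W_k,\nu_k\}_{k=1}^K$ with upper frame bound $B$, I fix orthonormal bases $\{f_{kj}\}_{j=1}^{d_k}$ of the $W_k$ and form the frame $\{\nu_k f_{kj}\}$ for $\cH_M$, whose frame operator agrees with the fusion frame operator. Using Theorem \ref{const} I complete this to a $B$-tight frame and then rescale by $1/\sqrt{B}$, obtaining a Parseval frame. Grouped back into subspaces this is exactly the Parseval fusion frame $\{W_k,\nu_k/\sqrt{B}\}_{k=1}^K$; the underlying subspaces $W_k$ are unchanged, since rescaling the generating vectors by a positive scalar does not alter their span.

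Next I invoke Theorem \ref{L1}, using the remark following it that removes the equal-dimension hypothesis. Applied with weights $\nu_k/\sqrt{B}$ in place of $\nu_k$, the theorem asserts that the principal angles between the Naimark complement subspaces of $W_1$ and $W_2$ are
\[
\left\{\arccos\left[\frac{\nu_1/\sqrt{B}}{\sqrt{1-\nu_1^2/B}}\cdot\frac{\nu_2/\sqrt{B}}{\sqrt{1-\nu_2^2/B}}\cos(\theta_\ell)\right]\right\}_{\ell=1}^{d_1}.
\]
Multiplying the numerator and denominator of each factor by $\sqrt{B}$ turns $\nu_k/\sqrt{B}$ into $\nu_k$ and $\sqrt{1-\nu_k^2/B}$ into $\sqrt{B-\nu_k^2}/\sqrt{B}$, so the two $\sqrt{B}$ factors cancel and the bracket simplifies to exactly $\frac{\nu_1}{\sqrt{B-\nu_1^2}}\frac{\nu_2}{\sqrt{B-\nu_2^2}}\cos(\theta_\ell)$, which is the claimed expression.

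The only point requiring any verification is that the Naimark complement subspaces $W'_k$ produced by Definition \ref{fusdef} coincide with the subspaces spanned by the Parseval-Naimark complement vectors generated inside the reduction above. But this is automatic: the construction in Definition \ref{naimarkdef} itself factors through the intermediate $B$-tight frame and its $1/\sqrt{B}$-rescaling to a Parseval frame, so the complement vectors in the two pictures differ only by the fixed scalar $\sqrt{B}$ and have identical spans. Because principal angles depend only on the subspaces, not on the chosen spanning vectors or weights, the angles computed for the Parseval-normalized version are precisely the principal angles of $W'_1,W'_2$. The main obstacle — if one can call it that — is this bookkeeping check that the scalar normalizations do not silently change the subspaces at either the fusion frame or the Naimark complement level; once that is observed, the proof reduces to the algebraic simplification above.
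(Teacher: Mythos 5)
Your proposal is correct and follows essentially the same route as the paper: the authors also reduce to the Parseval case by completing to a $B$-tight frame, rescaling by $1/\sqrt{B}$, and applying Theorem \ref{L1} with $\nu_k$ replaced by $\nu_k/\sqrt{B}$, the claimed formula then following from the same algebraic simplification. Your explicit check that the rescalings do not change the subspaces (so the principal angles are unaffected) is a sensible bookkeeping point that the paper leaves implicit.
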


Another measure of distance between subspaces of a Hilbert space is the {\em chordal distance} so named as it can be expressed as a multiple of the straight line distance between projection matrices living on a sphere.  This distance is closely related to fusion frames with maximal resilience to noise and erasures \cite{KPC}.  While there are several equivalent forms for this distance, as we have already calculated principal angles for Naimark fusion frames, we will use the following from \cite{CHS}.

\begin{definition}
If $W_1,W_2$ are subspaces of $\cH_M$ of dimension $D$, the {\em chordal distance} $d_c(W_1,W_2)$ between the subspaces is given by
\[ 
d_c^2(W_1,W_2) = D - Tr[P_1P_2] = D-\sum_{d=1}^D \cos^2\theta_{d},
\]
where $P_\ell$ is the orthogonal projection onto $W_\ell$, $\ell=1,2$, and $\{\theta_{d}\}_{d=1}^D$ are the principal angles between $W_1,W_2$.
\end{definition}

Using Corollary \ref{cor1}, we can correct Theorem 3.6 of \cite{CCHKP} which computes the chordal distances for the Naimark complement of a Parseval fusion frame incorrectly.

\begin{theorem}
Let $\{W_\ell,\nu_\ell\}_{\ell=1}^L$ be a fusion frame for $\cH_M$ with upper frame bound B, and let $\{W'_\ell,\sqrt{1-\nu_\ell^2}\}_{\ell=1}^L$ be a Naimark fusion frame.  Fix two subspaces, say $W_1,W_2$, each of dimension $D$.  Then
\[ 
	d_c^2(W'_1,W'_{2}) = \left [ 1 - \frac{\nu_1^2}{1-\nu_1^2}\frac{\nu_{2}^2}{1-\nu_{2}^2} \right ]D + \left [ \frac{\nu_1^2}{1-\nu_1^2}\frac{\nu_{2}^2}{1-\nu_{2}^2} \right ] d_c^2(W_1,W_{2}).
\]
\end{theorem}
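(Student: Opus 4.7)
The plan is to reduce the claim to a direct algebraic manipulation using the two earlier tools: the chordal-distance formula expressed in terms of cosines of principal angles, and Corollary \ref{cor1} relating the principal angles of the Naimark complement fusion frame to those of the original. The statement implicitly assumes $\dim W_1=\dim W_2=d$, and since the corollary on the principal angles transfers dimensions verbatim ($\dim W'_k=\dim W_k$ when the original subspaces have matching dimensions), the chordal distance is well defined for both pairs.

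First, I would unfold the definition
\[
 d_c^2(W'_1,W'_2)=d-\sum_{\ell=1}^d \cos^2\theta'_\ell,
\]
where $\{\theta'_\ell\}_{\ell=1}^d$ are the principal angles between $W'_1$ and $W'_2$. Next, I would invoke Corollary \ref{cor1} (specialized to $B=1$, the Parseval fusion frame case indicated by the weights $\sqrt{1-\nu_k^2}$), which gives
\[
 \cos\theta'_\ell=\frac{\nu_1}{\sqrt{1-\nu_1^2}}\,\frac{\nu_2}{\sqrt{1-\nu_2^2}}\,\cos\theta_\ell,
\]
for each $\ell=1,\dots,d$, where $\{\theta_\ell\}$ are the principal angles between $W_1$ and $W_2$. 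Squaring and summing pulls the constant factor outside the sum.

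Finally, I would substitute $\sum_{\ell=1}^d\cos^2\theta_\ell=d-d_c^2(W_1,W_2)$, which is just the defining identity of the chordal distance rearranged. Collecting terms yields
\[
 d_c^2(W'_1,W'_2)=d-\frac{\nu_1^2}{1-\nu_1^2}\frac{\nu_2^2}{1-\nu_2^2}\bigl(d-d_c^2(W_1,W_2)\bigr),
\]
which regroups exactly into the claimed formula.

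Because the argument is purely algebraic once Corollary \ref{cor1} is in hand, there is no serious obstacle in the proof proper; the only subtle point is sanity-checking that the hypotheses of Corollary \ref{cor1} truly apply here, namely that $B=1$ and $d_1=d_2=d$ are the relevant specializations (otherwise the denominators would be $B-\nu_k^2$ rather than $1-\nu_k^2$). I would note this at the outset so that the reader sees the corollary is being applied in its Parseval form, and so that the link to the miscalculation in \cite{CCHKP} is transparent.
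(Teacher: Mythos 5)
Your proposal is correct and follows essentially the same route as the paper: apply Corollary \ref{cor1} to express $\cos\theta'_\ell$ in terms of $\cos\theta_\ell$, square and sum inside the chordal-distance formula $d_c^2(W'_1,W'_2)=d-\sum_{\ell=1}^d\cos^2\theta'_\ell$, and substitute $\sum_{\ell=1}^d\cos^2\theta_\ell=d-d_c^2(W_1,W_2)$ before regrouping. Your remark about the $B$ versus $1$ mismatch is apt: the paper's own computation carries the factor $\frac{\nu_1^2}{B-\nu_1^2}\frac{\nu_2^2}{B-\nu_2^2}$ throughout, so the $1-\nu_k^2$ denominators in the stated formula are precisely the inconsistency you flagged, and your explicit specialization to the Parseval case $B=1$ resolves it.
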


\begin{proof}
Using Corollary \ref{cor1} we perform the calculation
\begin{align*}
	d_c^2(W'_1,W'_{2}) 
		&= D - \sum_{d=1}^D\frac{\nu_1^2}{B-\nu_1^2}\frac{\nu_{2}^2}{B-\nu_{2}^2}\cos^2\theta_{d}\\
		&= D-\left [ \frac{\nu_1^2}{B-\nu_1^2}\frac{\nu_{2}^2}{B-\nu_{2}^2} \right ] \sum_{d=1}^D \cos^2\theta_{d}\\
		&= D-\left [ \frac{\nu_1^2}{B-\nu_1^2}\frac{\nu_{2}^2}{B-\nu_{2}^2} \right ] \left [ D- d_c^2(W_1,W_2)\right ]\\
		&= \left [ 1 - \frac{\nu_1^2}{B-\nu_1^2}\frac{\nu_{2}^2}{B-\nu_{2}^2} \right ]D + \left [ \frac{\nu_1^2}{B-\nu_1^2}\frac{\nu_{2}^2}{B-\nu_{2}^2}\right ] d_c^2(W_1,W_{2}).
\end{align*}
\end{proof}

\section*{Acknowledgments}
Casazza/Smalyanau were supported by NSF DMS 1008183, NSF ATD 1042701, and AFOSR FA9550-11-1-0245.
Fickus was supported by NSF DMS 1042701, NSF CCF 1017278, AFOSR F1ATA00083G004 and AFOSR F1ATA00183G003. 
Mixon was supported by the A.B. Krongard Fellowship.

\end{document}